\renewenvironment{proof}[1][\proofname] {\par\pushQED{\qed}\normalfont\topsep6\p@\@plus6\p@\relax\trivlist\item[\hskip\labelsep\bfseries#1\@addpunct{.}]\ignorespaces}{\popQED\endtrivlist\@endpefalse}
\newcommand{\NN}{\mathbb{N}}
\newcommand{\PP}{\mathcal{P}}
\newcommand{\FF}{\mathcal{F}}
\newcommand{\GG}{\mathcal{G}}
\newtheorem{proposition}{Proposition}[section]
\newtheorem{lemma}[proposition]{Lemma}
\newtheorem{theorem}[proposition]{Theorem}
\newtheorem{claim}[proposition]{Claim}
\theoremstyle{definition}
\newtheorem*{remark*}{Remark}
\newtheorem*{theorem*}{Theorem}
\DeclareMathSymbol{\lsb@l}{\mathalpha}{letters}{`l}
\title{A note on saturation for $k$-wise intersecting families}
\author{Barnabás Janzer\thanks{Department of Pure Mathematics and Mathematical Statistics, University of Cambridge, Wilberforce Road, Cambridge CB3 0WB, United Kingdom. Email: bkj21@cam.ac.uk. This work was supported by EPSRC DTG.}
}
\date{\vspace{-21pt}}
\begin{document}
	\maketitle
\begin{abstract}
	A family $\mathcal{F}$ of subsets of $\{1,\dots,n\}$ is called $k$-wise intersecting if any $k$ members of $\mathcal{F}$ have non-empty intersection, and it is called maximal $k$-wise intersecting if no family strictly containing $\mathcal{F}$ satisfies this condition. We show that for each $k\geq 2$ there is a maximal $k$-wise intersecting family of size $O(2^{n/(k-1)})$. Up to a constant factor, this matches the best known lower bound, and answers an old question of Erd\H{o}s and Kleitman, recently studied by Hendrey, Lund, Tompkins, and Tran.
\end{abstract}
\section{Introduction}

Given positive integers $k\geq 2$ and $n$, we say that a family $\FF$ of subsets of $[n]=\{1,2,\dots,n\}$ is \textit{$k$-wise intersecting} if whenever $X_1,\dots,X_k\in\FF$ then $X_1\cap\dots\cap X_k\not =\emptyset$.
It is well known (and easy to see) that any ($2$-wise) intersecting family over $[n]$ has size at most $2^{n-1}$, so the largest possible size of a $k$-wise intersecting family is clearly $2^{n-1}$ for all $k$. This is achieved, for example, by taking $\FF=\{A\in\PP([n]):1\in A\}$, where $\PP(X)$ denotes the set of subsets of $X$.

However, the corresponding saturation problem of finding the smallest possible size of a maximal $k$-wise intersecting family is more interesting for $k\geq 3$. (A family $\FF$ of subsets of $[n]$ is \textit{maximal $k$-wise intersecting} if it is $k$-wise intersecting but no family $\FF'$ over $[n]$ strictly containing $\FF$ is $k$-wise intersecting.) This problem was originally raised by Erdős and Kleitman \cite{erdos1974extremal} in 1974. Very recently, Hendrey, Lund, Tompkins, and Tran \cite{hendrey2021maximal} studied this problem for $k=3$. They determined the smallest possible size of a maximal $3$-wise intersecting family exactly when $n$ is sufficiently large and even. For general $k$, they showed that the smallest possible size $f_k(n)$ of a maximal $k$-wise intersecting family satisfies $c_k\cdot 2^{n/(k-1)}\leq f_k(n)\leq d_k\cdot 2^{n/\lceil k/2\rceil}$ (for some constants $c_k,d_k>0$). They asked about closing the exponential gap between the lower and upper bounds.

In this note we prove the following result, which shows that the lower bound gives the right order of magnitude.

\begin{theorem}\label{thm_kwise}
	For each $k\geq 3$ there exists some constant $C_k$ such that for all $n$ there is a maximal $k$-wise intersecting family over $[n]$ of size at most $C_k\cdot2^{n/(k-1)}$.
\end{theorem}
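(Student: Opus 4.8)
The plan is to exploit the observation that any maximal $k$-wise intersecting family $\FF$ is automatically an up-set: if $X\in\FF$ and $X\subseteq X'$, then for any $X_1,\dots,X_{k-1}\in\FF$ we have $X_1\cap\dots\cap X_{k-1}\cap X'\supseteq X_1\cap\dots\cap X_{k-1}\cap X\neq\emptyset$, so $X'$ may be added and hence lies in $\FF$ by maximality. Thus it suffices to build an up-set. For such a family the $k$-wise condition is equivalent to asking that its minimal members be $k$-wise intersecting, and (using that a $k$-wise intersecting family of size at least $k$ is automatically $(k-1)$-wise intersecting) maximality is equivalent to the following blocking condition: for every $Y\notin\FF$ there are members $X_1,\dots,X_{k-1}$ with $X_1\cap\dots\cap X_{k-1}\cap Y=\emptyset$. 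Writing such an intersection as $J$, a set $Y$ is blocked exactly when $Y\subseteq J^c$, so maximality says that the complements of the $(k-1)$-fold intersections of $\FF$ cover, as subsets, precisely the non-members of $\FF$.

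With this dictionary in hand I would look for $\FF$ as a small up-set whose minimal members are the complements of the blocks of a partition $[n]=B_1\sqcup\dots\sqcup B_{k-1}$ with $|B_i|\approx n/(k-1)$, together with a bounded number of extra generators. The reason for this shape is the size bound: if every minimal member omits only a single block, then every member has complement contained in some $B_i$, so $\FF$ is contained in a union of $O_k(1)$ subcubes each of dimension $|B_i|\approx n/(k-1)$, giving $|\FF|=O(2^{n/(k-1)})$ at once. Setting $T_i=B_i$ and $S_i=[n]\setminus B_i$, the $k$-wise condition becomes the requirement that no $k$ of the sets $T_i$ cover $[n]$, while the blocking condition becomes the requirement that every non-member $Y$ be contained in a union of $k-1$ of the $T_i$; I would verify both through a direct case analysis based on which blocks $Y$ meets.

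The hard part — and the reason the bare partition does not already succeed — is that these two requirements pull in opposite directions. Taking the $T_i$ to be exactly the $k-1$ blocks forces $S_1\cap\dots\cap S_{k-1}=\emptyset$, so the family fails to be $(k-1)$-wise, hence $k$-wise, intersecting; on the other hand, forcing all $(k-1)$-fold intersections to be nonempty makes the blocks overlap, and then some $k-1$ of them may fail to cover a large non-member. Resolving this tension is the crux, and I expect to need slightly more than $k-1$ generators whose complements overlap in a controlled, symmetric pattern, arranged so that (i) every $k$ of them miss at least one point of $[n]$, yet (ii) every $k-1$ of them miss only a set small enough to sit inside some non-member, so that every non-member is blocked.

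A clean way to organize these overlaps, which I would try first, is a recursion on $k$: from a maximal $(k-1)$-wise intersecting family $\GG$ on a ground set of size $\approx n\cdot\frac{k-2}{k-1}$ (which by induction has size $\approx 2^{n/(k-1)}$, since $\tfrac{1}{k-2}\cdot n\tfrac{k-2}{k-1}=\tfrac{n}{k-1}$), adjoin a fresh block $B$ of size $\approx n/(k-1)$ and define $\FF$ on the enlarged ground set so that any $k$ of its members force two of them to "agree" on the old part in a way that invokes the $(k-1)$-wise property of $\GG$, while $B$ supplies the extra blockers needed for maximality. Throughout, the main obstacle is the maximality verification — checking that the enlarged family blocks every non-member — without destroying the $k$-wise property or inflating the size by more than a constant factor; this, rather than the size estimate, is where essentially all of the work will lie.
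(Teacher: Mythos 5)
Your reductions are sound and match the paper's viewpoint: the paper also works with a union of $k-1$ subcubes over a balanced partition (in complement form, so that the $k$-wise condition becomes ``no $k$ members have union $[n]$'' and maximality becomes ``every non-member completes some $k-1$ members to a cover of $[n]$''), and your size bound via containment in $O_k(1)$ subcubes of dimension $n/(k-1)$ is exactly how the $O(2^{n/(k-1)})$ estimate arises. You have also correctly located the central difficulty: the bare partition fails the $k$-wise condition, and naively fattening the generators threatens maximality. But the proposal stops precisely there. No family is actually defined, and neither of the two claims that constitute the theorem (the intersection property and the blocking/maximality property) is verified; you explicitly defer both. As it stands this is a correct framing of the problem, not a proof.

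Two further cautions about the routes you sketch. First, your structural guess that the up-set should have the block complements as minimal members ``together with a bounded number of extra generators'' cannot work as stated: if all $k-1$ sets $[n]\setminus B_i$ lie in $\FF$, their intersection is already empty, so extra generators cannot repair the $k$-wise property --- the block complements themselves must be perturbed. The paper's construction does this by designating a special element $a_i\in A_i$ in each block and taking (in complement form) $\FF=\bigcup_i(\FF_1(i)\cup\FF_2(i))$ with $\FF_1(i)=\PP(A_i)\setminus\{A_i\}$ and $\FF_2(i)$ consisting of sets $X\cup Y$ with $X\subseteq A_i$, $X\notin\{A_i,A_i\setminus\{a_i\}\}$, and $Y\subseteq\{a_1,\dots,a_{k-1}\}\setminus\{a_{i-1},a_i\}$; the cyclic exclusion of $a_{i-1}$ and $a_i$ is exactly the ``controlled, symmetric overlap pattern'' you are hoping for, and the resulting family has $\Theta(n)$ maximal members rather than $O_k(1)$ generators beyond the blocks. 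Second, the recursive scheme (building from a maximal $(k-1)$-wise family on a $\frac{k-2}{k-1}$-fraction of the ground set) has the right arithmetic but is not defined: you do not say how members of $\FF$ restrict to $\GG$, and the maximality verification you acknowledge as the hard part is left entirely open. The missing idea, concretely, is the choice of the special elements and the asymmetric exclusions in $\FF_2(i)$, together with the case analysis showing every non-member is blocked.
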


In the case when $n\geq 2(k-1)$ is a multiple of $k-1$, the exact value of our upper bound will be $2^{n/(k-1)+k-3}(k-1)-(2^{k-1}-1)(k-2)$. In the special case $k=3$ this upper bound is tight for $n$ sufficiently large (as shown by Hendrey, Lund, Tompkins, and Tran \cite{hendrey2021maximal}), but for $k\geq 4$ the construction has more complicated structure than for $k=3$. In fact, in \cite{hendrey2021maximal} it was shown that for $k=3$ (and $n$ large and even), the unique maximal $3$-wise intersecting families of smallest possible size are given by $\{A^c: A\in (\PP(X)\setminus\{X\})\cup(\PP(Y)\setminus\{Y\})\}$ for some partition $X\cup Y$ of $[n]$ into two equal parts. This was proved by first obtaining a stability result stating that for any `small' maximal $3$-wise intersecting family $\FF$, the family $\bar{\FF}=\{A^c:A\in \FF\}$ must be `close' to the union of two cubes $\PP(X)\cup\PP(Y)$ (with $X,Y$ as above).

However, the following result shows that, for $k\geq4$, directly generalising this approach cannot work, and it is necessary to have more complicated structure.

\begin{lemma}\label{lemma_closetocubes}
	Let $k\geq 4$, let $X_1\cup\dots \cup X_{k-1}$ be a partition of $[n]$ with each $X_i$ having size $n/(k-1)+O(1)$, and let $Q=\PP(X_1)\cup\dots\cup\PP(X_{k-1})$. If $|\FF\setminus Q|=o(2^{n/(k-1)})$ and $n$ is large enough, then $\bar{\FF}=\{A^c:A\in\FF\}$ cannot be maximal $k$-wise intersecting.
\end{lemma}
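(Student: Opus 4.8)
The plan is to pass to complements and reformulate everything in terms of covers. Writing $\tau(S)$ for the least number of members of $\FF$ whose union contains a set $S\subseteq[n]$, the identity $A_1^c\cap\dots\cap A_k^c=(A_1\cup\dots\cup A_k)^c$ shows that $\bar{\FF}$ is $k$-wise intersecting if and only if no $k$ distinct members of $\FF$ cover $[n]$; since $\FF$ is huge, this is equivalent to $\tau([n])\ge k+1$. Likewise $\bar{\FF}$ is maximal exactly when every non-member $B$ satisfies $\tau(B^c)\le k-1$, and it fails to be maximal as soon as we exhibit a single non-member $B$ with $\tau(B^c)\ge k$. If $\bar{\FF}$ is not even $k$-wise intersecting we are done, so I assume $\tau([n])\ge k+1$ and that $\bar{\FF}$ is maximal, aiming for a contradiction.

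First I would record two structural consequences of maximality. If some element $y$ lies in no member of $\FF$, then any non-member $B$ avoiding $y$ (plenty exist, as $\FF\setminus Q$ is tiny) has $y\in B^c$ and hence $B^c$ uncoverable, contradicting maximality; so every element is covered. More importantly, I claim $\FF$ must be downward closed: if $B\subsetneq M$ for some member $M$ but $B\notin\FF$, then a cover of $B^c$ together with $M$ covers $[n]$, so $\tau(B^c)\ge\tau([n])-1\ge k$, making $B$ addable and contradicting maximality. Thus $\FF$ is downward closed. In particular, for covering a single part $X_i$ only subsets of $X_i$ are useful (the restriction of any member to $X_i$ is again a member), so $c_i:=\tau(X_i)$ is computed inside $\PP(X_i)$, and $\sum_i c_i\ge\tau([n])\ge k+1$.

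The heart of the argument is a counting step that exploits $k\ge 4$. For a cross-pair $B=\{x_i,x_j\}$ with $x_i\in X_i$, $x_j\in X_j$ and $i\ne j$, the set $B^c$ equals all of $X_\ell$ on each part $\ell\ne i,j$ and equals $X_i\setminus\{x_i\}$, $X_j\setminus\{x_j\}$ on parts $i,j$. Ignoring cross-members for the moment, $\tau(B^c)=\sum_{\ell\ne i,j}c_\ell+\tau(X_i\setminus\{x_i\})+\tau(X_j\setminus\{x_j\})$, and since deleting one point lowers a cover number by at most one (while a nonempty set needs at least one member) each punctured term is at least $\max(c_i-1,1)$. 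Writing $p$ for the number of parts with $c_\ell=1$ and $q=k-1-p$ for those with $c_\ell\ge2$ (note $q\ge1$), I would choose the two punctured parts to contain as few $c_\ell\ge2$ parts as possible: if $p\ge2$ pick two such parts, if $p=1$ pick it with one $c\ge2$ part, and if $p=0$ pick any two. A short check using $\sum_\ell c_\ell\ge\max(k+1,\,p+2q)$ gives $\tau(B^c)\ge k$ in every case — and it is precisely the case $p=0$, where one needs $2(k-1)-2\ge k$, that forces $k\ge4$ (and explains why $k=3$ escapes, matching the optimal two-cube family in which every $c_\ell=2$). As a cross-pair is never contained in a single part, it is automatically a non-member, so such a $B$ witnesses non-maximality.

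The main obstacle is that this clean decomposition of $\tau(B^c)$ can be spoiled by the few cross-members in $\FF\setminus Q$: one such member can cover leftover pieces of several full parts at once and thereby \emph{merge} what would otherwise be separate sets of a minimum cover, each merge potentially lowering $\tau(B^c)$ by one. Controlling this is where I expect the real work to lie. The point to exploit is that $\FF$ is downward closed and $|\FF\setminus Q|=o(2^{n/(k-1)})$: a cross-member meeting two parts in sets of sizes $\alpha,\beta$ forces all $(2^\alpha-1)(2^\beta-1)$ of its cross-subsets into $\FF\setminus Q$, so the cross-members are too sparse and too structured to provide many useful merges, while any cover of size $\le k-1$ can contain at most $k-1$ of them. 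I would then average over the $\Theta(n^2)$ choices of the punctured points $x_i,x_j$ (and over the eligible pairs of parts) to select a $B$ whose complement admits a minimum cover using no cross-member at all, restoring the bound $\tau(B^c)\ge k$ and completing the contradiction.
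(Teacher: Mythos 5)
Your reduction to cover numbers, the proof that $\FF$ is a down-set, and the case analysis on $p$ (which correctly locates one reason $k=3$ behaves differently) are all fine, and they run parallel to facts the paper also uses. But the step you yourself flag as ``where the real work lies'' is a genuine gap, and the quantitative situation makes the fix you sketch unworkable. Your pool of candidate witnesses consists of the $\Theta(n^2)$ cross-pairs $B=\{x_i,x_j\}$, while $|\FF\setminus Q|$ is only assumed to be $o(2^{n/(k-1)})$, which is vastly larger than $n^2$. Two problems follow. First, your assertion that a cross-pair ``is automatically a non-member'' is false: nothing prevents \emph{every} cross-pair from lying in $\FF\setminus Q$ (this is consistent with all the hypotheses and with downward closure), in which case you have no witness at all. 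Second, even if non-member cross-pairs exist, averaging over $\Theta(n^2)$ choices of $(x_i,x_j)$ cannot beat up to $o(2^{n/(k-1)})$ cross-members by any union bound: a single cross-member $S'$ with small traces on two parts can, in principle, participate in a $(k-1)$-cover of $B^c$ for a constant fraction of all pairs $(x_i,x_j)$, and the downward-closure/sparsity observation you invoke only bounds the \emph{sizes} of the traces of cross-members, not the number of pairs they can serve. So the decomposition $\tau(B^c)\ge\sum_{\ell\ne i,j}\tau(X_\ell)+\tau(X_i\setminus\{x_i\})+\tau(X_j\setminus\{x_j\})$, on which your whole contradiction rests, is not recovered.

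The paper's proof escapes exactly this trap by drawing its witness from an exponentially large family rather than from a quadratic-sized one. It fixes a part $X_{k-1}\notin\FF$ and considers sets $S$ subject to four conditions (no decomposition $S=S_1\cup\dots\cup S_{k-1}$ with all $S_i\in\FF$ and $S_1\in\FF\setminus Q$; $S\cap X_i\ne S'\cap X_i$ for every $S'\in\FF\setminus Q$ and $i\le k-2$; $S$ meets every part; $S^c\cap(X_1\cup\dots\cup X_{k-2})\notin\FF$). Each condition excludes only $o(2^n)$ sets -- for instance the first excludes at most $|\FF\setminus Q|\cdot|\FF|^{k-2}=o(2^{n/(k-1)})\cdot O(2^{(k-2)n/(k-1)})=o(2^n)$ sets, and the last excludes $O(2^{2n/(k-1)})=o(2^n)$, which is where $k\ge4$ actually enters (a different place from your case $p=0$). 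Hence a valid $S$ exists, and applying maximality to $T=S^c\cap(X_1\cup\dots\cup X_{k-2})$ forces $X_{k-1}\in\FF$, a contradiction. If you want to salvage your route, you would similarly need to enlarge the witness set from cross-pairs to something of size comparable to $2^n$ (or at least $\gg|\FF\setminus Q|\cdot|\FF|^{k-2}$) so that the bad events can be excluded by counting; as written, the argument does not close.
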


We mention that many other saturation problems have already been studied in the context of set systems and intersection properties. For example, several authors gave bounds for the smallest possible size $m(r)$ of a set system which is maximal among ($2$-wise) intersecting families $\FF\subseteq \binom{\NN}{r}$ consisting of sets of size $r$ -- see, for example, \cite{dow1985lower,blokhuis1987more,boros1989maximal}. A linear lower bound follows from a result of Erdős and Lovász \cite{erdHos1973problems}, and Dow, Drake, Füredi, and Larson \cite{dow1985lower} showed that in fact $m(r)\geq 3r$ for $r\geq 4$. Blokhuis \cite{blokhuis1987more} proved a polynomial upper bound of $m(r)\leq r^5$, and for certain values of $r$ quadratic upper bounds are also known -- see, e.g., \cite{blokhuis1987more,boros1989maximal}. Finding the order of magnitude of $m(r)$ is still an open problem. See the introduction and the references in \cite{hendrey2021maximal} for other related saturation problems.

\section{The construction}

We now prove Theorem \ref{thm_kwise} by describing a family of size $O(2^{n/(k-1)})$ and showing that it is maximal $k$-wise intersecting over $[n]$. For simplicity, we will work with complements, using the observation that $\bar{\GG}=\{X^c:X\in\GG\}$ is $k$-wise intersecting if and only if no $k$ elements of $\GG$ have union $[n]$.

Fix some $k\geq 3$ and $n\geq 2(k-1)$ integers. Partition $[n]$ into $k-1$ sets $A_1,\dots,A_{k-1}$ which are as close as possible in size, and pick `special' elements $a_i\in A_i$ for each $i$. Consider the following families. (All indices will be understood mod $k-1$, so, for example, $a_0=a_{k-1}$.) \begin{align*}
\FF_1(i)&=\PP(A_i)\setminus\{A_i\}\\
\FF_2(i)&=
\{X\cup Y: X\subseteq A_i, Y\subseteq \{a_1,a_2,\dots,a_{k-1}\}\setminus\{a_{i-1},a_i\}, X\not =A_i\setminus\{a_i\}, X\not=A_i\}\\
\FF&=\bigcup_{i=1}^{k-1}(\FF_1(i)\cup \FF_2(i)).
\end{align*}

We will show that $\{A^c:A\in \FF\}$ is maximal $k$-wise intersecting. Note that if $k=3$ then $\FF_2(i)\subseteq \FF_1(i)$ for each $i=1,2$, so $\FF$ is simply $\FF_1(1)\cup\FF_1(2)=(\PP(A_1)\cup\PP(A_2))\setminus\{A_1,A_2\}$. This was shown to be (up to isomorphism) the unique minimal-sized construction when $k=3$ and $n$ is even and sufficiently large by Hendrey, Lund, Tompkins, and Tran \cite{hendrey2021maximal}. Furthermore, note that  $$|\FF|=2^{{n}/{(k-1)}+k-3}(k-1)-(2^{k-1}-1)(k-2)$$ if $k-1$ divides $n$. Indeed, the number of subsets of $\{a_1,\dots,a_{k-1}\}$ appearing in $\FF$ is $2^{k-1}-1$, the number of sets which are not subsets of $\{a_1,\dots,a_{k-1}\}$ but appear in some $\FF_2(i)$ is $(k-1)\cdot (2^{n/(k-1)}-4)\cdot 2^{k-3}$, and finally, the only elements of $\FF$ we have not yet counted are the $(k-1)$ sets $A_i\setminus\{a_i\}$ for $i=1,\dots,k-1$. Summing these contributions gives the formula above.

\begin{claim}\label{claim_intersecting}
	The family $\FF$ contains no $k$ elements having union $[n]$.
\end{claim}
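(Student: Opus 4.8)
The plan is to argue by contradiction: suppose $S_1,\dots,S_k\in\FF$ have union $[n]$, and keep separate track of how the \emph{bulk} $A_i\setminus\{a_i\}$ of each part and the \emph{special} elements $a_1,\dots,a_{k-1}$ get covered. The first fact I would record is that a bulk element of part $i$ can only lie in a member of $\FF_1(i)\cup\FF_2(i)$ (which I will call the \emph{home-$i$} sets), since every other set in $\FF$ meets $A_i$ only in $a_i$; in particular a single set can carry bulk elements of at most one part. The second fact is twofold: a set in $\FF_2(i)$ always omits at least one element of $A_i\setminus\{a_i\}$ (this is exactly what $X\neq A_i$ and $X\neq A_i\setminus\{a_i\}$ encode), and the special-element content of any home-$i$ set is contained in $\{a_1,\dots,a_{k-1}\}\setminus\{a_{i-1}\}$, so \emph{no home-$i$ set ever contains $a_{i-1}$}. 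In particular every member of $\FF$ omits at least one special element. These facts are the engine of the argument.

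Next I would set up the counting. Let $d_i$ be the number of distinct $S_t$ containing a bulk element of part $i$; each such set is a home-$i$ set with bulk content in part $i$ only, so the sets counted by different $d_i$ are distinct and $\sum_{i=1}^{k-1}d_i$ is at most the number of distinct sets used, which is at most $k$. Since $n\geq 2(k-1)$ forces $|A_i|\geq 2$, each $A_i\setminus\{a_i\}$ is non-empty and must be covered, so $d_i\geq 1$ for all $i$. Over $k-1$ parts this gives $\sum_i d_i\in\{k-1,k\}$, every $d_i\leq 2$, and at most one index attains the value $2$.

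The key step is a rigidity statement: if $d_i=1$, the unique home-$i$ set must equal $A_i\setminus\{a_i\}$. Indeed it must cover all of $A_i\setminus\{a_i\}$ by itself, so it cannot lie in $\FF_2(i)$ (those omit a bulk element); hence it is a proper subset of $A_i$ containing $A_i\setminus\{a_i\}$, forcing equality, and this set contains \emph{no} special element. Now the two cases close quickly. If all $d_i=1$, then every part contributes the special-free set $A_i\setminus\{a_i\}$, so all of $a_1,\dots,a_{k-1}$ would have to be covered by a single leftover set; but every member of $\FF$ omits some special element, so one set cannot contain all $k-1\geq 2$ of them. If instead one part $p$ has $d_p=2$ and every other part has $d_i=1$, then there is no leftover set, every $S_t$ with $t$ outside part $p$ equals a special-free $A_i\setminus\{a_i\}$, and the two home-$p$ sets both omit $a_{p-1}$; hence $a_{p-1}$ is left uncovered. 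Either way $\bigcup_t S_t\neq[n]$, a contradiction.

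I expect the main obstacle to be the rigidity step together with the bookkeeping that keeps the pigeonhole clean: one must check that a set carries bulk elements for only one part, that the $d_i$ really count distinct sets (so repetitions among the $S_t$ only make covering harder), and that the cyclic exclusion ``home-$i$ omits $a_{i-1}$'' lines up so that the doubled part $p$ is exactly the one whose predecessor's special element $a_{p-1}$ becomes impossible to reach. Once these are pinned down the contradiction is immediate, and the size formula quoted before the claim is then a separate, routine count.
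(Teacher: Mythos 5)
Your proof is correct. It is built on the same two elementary observations that drive the paper's argument --- an element of $A_i\setminus\{a_i\}$ can only be covered by a member of $\FF_1(i)\cup\FF_2(i)$, and every member of $\FF_2(i)$ omits such an element --- but it is organised along a genuinely different case split. The paper cases on which $\FF_2$-families are represented among the $k$ sets: if $\FF_2(i)$ and $\FF_2(j)$ occur for distinct $i,j$, it pigeonholes $k-1$ chosen bulk elements $b_1,\dots,b_{k-1}$ into the remaining $k-2$ sets; otherwise it counts the distinct sets forced by the covering requirements (one per part $t=2,\dots,k-2$, two for $A_{k-1}$, one more meeting $A_1$) and finds they exceed the $k-1$ sets available. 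You instead case on the covering multiplicities $d_i$, first proving a rigidity statement (a part covered by a single set forces that set to equal $A_i\setminus\{a_i\}$, hence to contain no special element), and then derive both contradictions from the special elements alone: either a single leftover set would have to contain all of $a_1,\dots,a_{k-1}$, which no member of $\FF$ does, or $a_{p-1}$ is unreachable because every home-$p$ set omits it. Your version buys a cleaner, more uniform mechanism for closing both cases and makes the role of the cyclic exclusion of $a_{i-1}$ from $\FF_1(i)\cup\FF_2(i)$ completely explicit; the paper's version is shorter on the page. The only hypotheses you lean on implicitly --- that $n\geq 2(k-1)$ gives $|A_i|\geq 2$ so each $d_i\geq 1$, and that $p-1\not\equiv p\pmod{k-1}$, which needs $k\geq 3$ --- both hold in the setting of the claim, so there is no gap.
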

\begin{proof}
Suppose we have sets $Y_1,\dots,Y_k\in \FF$ satisfying $Y_1\cup\dots\cup Y_k=[n]$. Then clearly at least one of them must come from some $\FF_2(i)$, we may assume $Y_1\in\FF_2(1)$. If there is some $j\not=1$ and $i\not =1$ such that $Y_j\in \FF_2(i)$, then for each $t$ we can pick $b_t\in A_t\setminus\{a_t\}$ such that $b_t\not \in Y_1\cup Y_j$. Then no element of $\FF$ contains more than one $b_t$, but $\{b_1,\dots,b_{k-1}\}\subseteq\bigcup_{\ell\not=1,j}Y_\ell$, giving a contradiction. On the other hand, if there is no such pair $(i,j)$, then  $\{Y_\ell:\ell\not=1\}$ must contain at least one non-empty set from $\FF_1(t)$ (to cover $A_t\setminus\{a_t\}$) for $t=2,\dots,k-2$, at least two different sets from $\FF_1(k-1)$ (to cover $A_{t-1})$, and at least one set having an element in $A_1$, again giving a contradiction since these $k$ sets must all be different.
\end{proof}
\begin{claim}\label{claim_maximal}
	For any $X\in \PP([n])\setminus\FF$ there exist $X_1,\dots,X_{k-1}\in\FF$ such that $X_1\cup X_2\cup\dots\cup X_{k-1}\cup X=[n]$.
\end{claim}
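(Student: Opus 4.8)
The plan is to prove the contrapositive form directly: for every $X\in\PP([n])\setminus\FF$ I will exhibit $k-1$ members of $\FF$ whose union, together with $X$, is all of $[n]$; equivalently I will cover $[n]\setminus X$ by $k-1$ sets of $\FF$. Partition the indices by how $X$ meets each part: let $I=\{i:A_i\cap X=\emptyset\}$ (parts \emph{missed} by $X$) and $F=\{i:A_i\subseteq X\}$ (parts \emph{swallowed} by $X$), and call the remaining indices \emph{partial}. For a partial index $i$ the set $A_i\setminus X$ is a nonempty proper subset of $A_i$, hence lies in $\FF_1(i)$; it is additionally realisable as the $A_i$-part of a set in $\FF_2(i)$ (so that the set may also carry `specials' $a_j$) exactly when $A_i\setminus X\neq A_i\setminus\{a_i\}$, i.e. when $X\cap A_i\neq\{a_i\}$. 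I dispose of $I=\emptyset$ at once: then the $k-1$ sets $A_i\setminus X\in\FF_1(i)$ have union $[n]\setminus X$.

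So assume $I\neq\emptyset$. Take as the \emph{bulk} of the cover the set $A_i\setminus X\in\FF_1(i)$ for each partial $i$ and the set $A_i\setminus\{a_i\}\in\FF_1(i)$ for each $i\in I$. This covers all of $[n]\setminus X$ except the \emph{leftover specials} $T=\{a_i:i\in I\}$, and it uses exactly one set per non-swallowed part. Everything now reduces to covering $T$ inside the budget $k-1$. A special $a_i$ can be covered either by a second set based at $A_i$, or as a $Y$-element of some set based at a part $m$; the latter is permitted by the definition of $\FF_2(m)$ precisely when $m\neq i-1,i$. Thus one set based at position $m$ can absorb every element of $T$ except possibly $a_{m-1}$, and this single cyclic restriction is the crux of the argument.

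If $F\neq\emptyset$, fix $f\in F$. Since $A_f\subseteq X$ needs no cover, the slot for $A_f$ is free, and I spend it on one extra set in $\FF_2(f+1)$: place $a_{f+1}$ in its $A_{f+1}$-part when $f+1\in I$, and place all of $T\setminus\{a_{f+1}\}$ in its $Y$-part. This is legitimate because the forbidden special $a_{(f+1)-1}=a_f$ does not belong to $T$ (as $f\notin I$), so this one set carries every leftover special; counting shows the cover uses $k-|F|\le k-1$ sets, which I pad arbitrarily to exactly $k-1$.

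The remaining, and hardest, case is $F=\emptyset$, where there is no free slot to spend and only the \emph{host-capable} partials (those with $X\cap A_i\neq\{a_i\}$) can carry specials at no extra cost, each covering $T\setminus\{a_{i-1}\}$. If there are none, then $X=\{a_j:j\notin I\}$ and one checks $X\in\FF_2(m+1)$ for any $m\in I$, contradicting $X\notin\FF$. If there are at least two, their positions are distinct and together they avoid blocking every single special, so $T$ is covered. The delicate situation is a unique host-capable partial, at position $u$: it covers $T\setminus\{a_{u-1}\}$, and the danger is $a_{u-1}$ when $u-1\in I$. Here I first observe that $X\notin\FF$ forces $X\cap A_u=A_u\setminus\{a_u\}$ (otherwise $X\in\FF_2(u)$), so that $A_u\setminus X=\{a_u\}$ and the slot at $A_u$ is effectively free; I then cover $A_{u-1}$ by two sets based at $A_{u-1}$, the second an $\FF_2(u-1)$ set whose $Y$-part carries $a_u$ together with all of $T\setminus\{a_{u-1}\}$. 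This is admissible provided some $i^\ast\in I$ is immediately preceded by a \emph{forced} partial (one with $X\cap A_i=\{a_i\}$), so that positioning the extra set at $i^\ast$ respects the cyclic restriction; a short argument on the cyclic order of $I$, the forced partials, and $u$ shows that if no such $i^\ast$ exists then there are no forced partials at all, whence $X=A_u\setminus\{a_u\}\in\FF_1(u)$, again contradicting $X\notin\FF$. I expect this final bookkeeping — verifying in the single-host case that the one affordable $\FF_2$ set can be positioned to absorb every still-uncovered special without violating the $\{a_{m-1},a_m\}$ constraint — to be the main obstacle.
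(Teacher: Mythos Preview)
Your argument is correct, though the exposition in the final sub-case is somewhat tangled. In particular, you first describe splitting $A_{u-1}$ into two sets and then abruptly switch to ``positioning the extra set at $i^\ast$''; what you really mean is that the doubling should take place at some $i^\ast\in I$ (not necessarily $u-1$) whose cyclic predecessor is a forced partial, and the ``short argument'' you allude to is simply: starting from $u-1\in I$ and walking backwards through consecutive members of $I$, one eventually exits $I$ at some index $j$; if $j\neq u$ then $j$ is a forced partial and $i^\ast=j+1$ works, while $j=u$ means every index except $u$ lies in $I$, so there are no forced partials and $X=A_u\setminus\{a_u\}\in\FF_1(u)$. With that clarification the bookkeeping goes through.

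Your route is genuinely different from the paper's. The paper gives an ad~hoc list of five overlapping cases (``$X$ meets every part'', ``$X$ meets $A_i\setminus\{a_i\}$ and also $A_{i-1}$'', etc.), exhibits an explicit cover in each, and then separately checks exhaustiveness. You instead organise everything around the partition of indices into missed~/ swallowed~/ partial, and within the partials distinguish \emph{host-capable} from \emph{forced}; this makes the role of the cyclic constraint $a_{m-1}\notin Y$ completely transparent and explains \emph{why} the construction is maximal rather than merely verifying it. The price is that your hardest sub-case (unique host-capable partial with $u-1\in I$) requires a more delicate manoeuvre---dropping the slot at $u$ and doubling at a well-chosen $i^\ast$---whereas the paper's Cases~4 and~5 dispatch the analogous situations with one-line constructions. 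Both proofs are of comparable length; the paper's is quicker to verify case by case, yours is more conceptual.
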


\begin{proof}
		We first consider the following five cases, then check that each choice of $X$ belongs to at least one of these cases.
\begin{itemize}

	\item{Case 1:} $X\cap A_i\not =\emptyset$ for all $i$. Then let $X_i=A_i\setminus X$, so $X_i\in \FF_1(i)$ and the $X_i$ satisfy the conditions.
	\item{Case 2:} There is some $i$ such that $X\cap A_i$ contains an element $b_i$ with $b_i\not =a_i$ and $X\cap A_{i-1}\not =\emptyset$. We may assume that $i=1$. Then let $X_1=(A_1\setminus\{b_1\})\cup\{a_2,a_3,\dots,a_{k-2}\}$ (so $X_1\in\FF_2(1)$), let $X_j=A_j\setminus \{a_j\}$ for $j=2,\dots,k-2$ (so $X_j\in \FF_1(j)$) and let $X_{k-1}=A_{k-1}\setminus X$ (so $X_{k-1}\in \FF_{1}(k-1)$). These clearly satisfy the conditions.
	\item{Case 3:} There exist $i,j, b_i,b_j$ such that $i\not=j$, $b_i\in X\cap (A_i\setminus\{a_i\})$, $b_j\in X\cap (A_j\setminus\{a_j\})$. Then let $X_i=(A_i\setminus X)\cup\{a_{\ell}:\ell\not =i,i-1\}$, $X_j=(A_j\setminus X)\cup \{a_{\ell}:\ell\not=j,j-1\}$, and $X_\ell=A_\ell\setminus \{a_\ell\}$ for $\ell\not =i,j$. Then $X_i\in\FF_2(i), X_j\in \FF_2(j)$, and $X_\ell\in \FF_1(\ell)$ for $\ell\not =i,j$, so it is easy to see that the conditions are satisfied.
	\item{Case 4:} $X\supseteq (A_i\setminus\{a_i\})\cup \{a_j\}$ for some $i,j$ with $j\not =i$. Then let $X_i=\{a_t:t\not =j\}$ (so $X_i\in \FF_2(j+1)$), and let $X_{\ell}=A_\ell\setminus\{a_\ell\}$ for $\ell\not =i$ (so $X_\ell \in \FF_1(\ell)$). It is easy to see that the conditions are satisfied.
	\item{Case 5:} $X=A_i$ for some $i$. Then let $X_i=\{a_t:t\not=i\}$ (so $X_i\in \FF_2(i+1)$) and $X_\ell=A_\ell\setminus\{a_\ell\}$ for $\ell\not =i$ (so $X_\ell \in \FF_1(\ell)$). Then the conditions are again satisfied.
\end{itemize}

\bigskip
We now check that any $X\in \PP([n])\setminus\FF$ belongs to at least one of these cases. If $X\subseteq \{a_1,\dots,a_{k-1}\}$ then in fact $X=\{a_1,\dots,a_{k-1}\}$ and we are in Case 1. Otherwise there is some $i$ and some $b_i\in A_i$ such that $b_i\in X$ and $b_i\not  =a_i$. If $X\cap X_{i-1}\not=\emptyset$ then we are in Case 2. Otherwise, if $X$ is not a subset of $A_i\cup\{a_{\ell}:\ell\not=i,i-1\}$ then we are in Case 3. Finally, if $X$ is a subset of $A_i\cup\{a_{\ell}:\ell\not=i,i-1\}$, then we are in Case 4 or Case 5 since $X\not\in\FF_1(i)\cup\FF_2(i)$.
\end{proof}

\begin{proof}[Proof of Theorem \ref{thm_kwise}]
	By Claims \ref{claim_intersecting} and \ref{claim_maximal}, the family $\bar{\FF}=\{X^c:X\in \FF\}$ is maximal $k$-wise intersecting for all $k\geq 3$ and $n\geq 2(k-1)$. However, $\FF_1(i)$ and $\FF_2(i)$ both have size $O(2^{n/(k-1)})$ for each $i$, so $|\FF|=O(2^{n/(k-1)})$. The result follows.
\end{proof}

\section{Non-existence of certain types of maximal families}
Finally, we prove Lemma \ref{lemma_closetocubes} stating that there can be no construction for $k\geq4$ which is close to the union of $k-1$ cubes.

\begin{proof}[Proof of Lemma \ref{lemma_closetocubes}]
	Suppose that $\FF$ is as in the statement of the lemma, and $\bar{\FF}$ is maximal $k$-wise intersecting. Then at least one $X_i$ does not appear in $\FF$, we may assume $X_{k-1}\not \in \FF$. Let $\GG$ be the family of sets $S$ over $[n]$ satisfying the following conditions.
	\begin{itemize}
		\item The set $S$ cannot be written as $S_1\cup \dots\cup S_{k-1}$ such that $S_i\in \FF$ for all $i$ and $S_1\in \FF\setminus Q$.
		\item For all $S'\in \FF\setminus Q$ and $i\leq k-2$, we have $S'\cap X_i\not =S\cap X_i$.
		\item For all $i$, $S\cap X_i$ is non-empty.
		\item We have $S^c\cap (X_1\cup\dots\cup X_{k-2})\not \in \FF$.
	\end{itemize}
	It is easy to see that $|\GG|=(1-o(1))2^n$. Pick any $S\in \GG$, and let $T=S^c\cap(X_1\cup\dots\cup X_{k-2})$. Then $T\not \in \FF$, so (by maximality) there are $T_1,\dots,T_{k-1}\in\FF$ such that $T\cup T_1\cup T_2\cup\dots\cup T_{k-1}=[n]$. Furthermore, by maximality, $\FF$ must be a down-set, so we may assume that in fact $T^c=T_1\cup \dots\cup T_{k-1}$. So $S\cup X_{k-1}= T_1\cup \dots\cup T_{k-1}$. Write $S_i=T_i\setminus(X_{k-1}\setminus S)$, then $S=S_1\cup \dots\cup S_{k-1}$ and $S_i\in \FF$ for all $i$ (as $\FF$ is a down-set). So we must have $S_1,\dots,S_{k-1}\in Q\cap\FF$.
	Since $S\cap X_i$ is non-empty for all $i$, we may assume that $S_i=S\cap X_i$ for all $i$. Then, for $i\leq k-2$, $T_i\cap X_i=S\cap X_i$, so $T_i\not \in \FF\setminus Q$. Hence $T_i\in Q$ and $T_i=S_i$. But then $T_{k-1}=X_{k-1}$, giving a contradiction.
\end{proof}

\bibliography{Bibliography}
\bibliographystyle{abbrv}
\end{document}